
\documentclass[12pt]{amsart}
\usepackage{amssymb,bbold}
\usepackage[all]{xy}

\textwidth = 440pt
\textheight = 630pt
\hoffset = -45pt
\voffset = -35pt
\marginparwidth = 58pt

\theoremstyle{plain}
\newtheorem{theorem}{Theorem}
\newtheorem{corollary}[theorem]{Corollary}
\newtheorem{lemma}[theorem]{Lemma}
\newtheorem{proposition}[theorem]{Proposition}

\theoremstyle{definition}

\newtheorem{remark}[theorem]{Remark}






\begin{document}
\baselineskip 18pt

\title[Order Continuous and Topological Representations]
      {Order Continuous and Topological Representations of Archimedean Vector Lattices via $S(X)$-spaces}

\author[O.~Zabeti]{Omid Zabeti}
\address[O.~Zabeti]
  {Department of Mathematics, Faculty of Mathematics, Statistics, and Computer science,
   University of Sistan and Baluchestan, Zahedan,
   P.O. Box 98135-674. Iran}
\email{o.zabeti@gmail.com}
\keywords{Unbounded order convergence, order continuous Banach lattice, unbounded norm convergence, Fremlin projective tensor product, Fremlin tensor product.}
\subjclass[2020]{Primary:  46M05. Secondary:  46A40.}
\maketitle

\begin{abstract}
For an arbitrary topological space $X$, assume that  $S(X)$ is the vector lattice of all equivalence classes of real-valued continuous functions on open dense subsets of $X$; it is a laterally complete vector lattice but not a normed lattice, certainly. Nevertheless, we can have the extended unbounded norm topology ($un$-topology) on it. On the other hand, by a remarkable result of  Wickstead, there exists a representation approach for every Archimedean vector lattice $E$ in terms of $S(X)$-spaces. In this paper, we show that this representation is order continuous and when $E$ is order complete, it coincides with the known Maeda-Ogasawara representation. Moreover, when $E$ is a Banach lattice, by consideration of the $un$-topology on $E$ and the extended $un$-topology on $S(X)$, we show that this representation is, in fact, a homeomorphism. With the aid of this topological attitude, we establish a representation theorem (in fact a homeomorphism) for the Fremlin projective tensor product between Banach lattices, in terms of $S(X)$-spaces, as well.
\end{abstract}
\date{\today}

\maketitle
\section{Motivation and introduction}

Let us start with some motivation. Assume that $E$ is an Archimedean vector lattice. There are two important and significant representations of $E$ in terms of some functions spaces. The first one is the known  Maeda-Ogasawara representation theorem which states that $E$ can be considered as an order dense vector sublattice of some $C^{\infty}(\Omega)$-space, where $\Omega$ is an extremally disconnected compact Hausdorff space; in fact, $C^{\infty}(\Omega)=E^u$, the universal completion of $E$ (see \cite[Chapter 7]{AB1} for a comprehensive explanation). This approach is practical and useful; however, there is one problem, here. $C^{\infty}(X\times Y)$ may not be a vector space even if $X$ and $Y$ are extremally disconnected compact Hausdorff spaces (see \cite{BW:17} for more details). This problem causes difficulty while we are dealing with tensor products. Let us explain more. By the remarkable Kakutani theroem every Archimedean vector lattice $X$ with an order unit is a norm and order dense sublattice of a $C(K)$-space for some compact Hausdorff space $K$. Furthermore, when $X$ and $Y$ are Archimedean vector lattices with order units and with the representations $C(K_1)$ and $C(K_2)$ ($K_1$ and $K_2$ compact Hausdorff spaces), respectively, the Fremlin tensor product $X\overline{\otimes}Y$ is norm and order dense in  $C(K_1\times K_2)$; which is certainly a vector lattice. Now, if we want to develop such theory for arbitrary vector lattices (with $C^{\infty}(\Omega)$-spaces instead of $C(K)$-spaces), we must overcome the mentioned difficulty.

 Buskes and Wickstead in \cite{BW:17} solved this problem by considering the space $S(X)$ for any topological space $X$. In fact, $S(X)$ consists of all equivalence classes of continuous real-valued functions defined on open dense subsets of $X$; it is an Archimedean vector lattice under pointwise lattice and algebraic operations. In this case, we can have a representation for the Fremlin tensor product of general Archimedean vector lattices in terms of $S(X)$-spaces (for a complete context on this approach, see \cite{BW:17, Z:24}). Recently, Wickstead in \cite{Wickstead:24}, established a representation theorem for Archimedean vector lattices in terms of $S(X)$-spaces. 
 
 In this paper, we show that this representation is order continuous. Furthermore, when $E$ is order complete, we prove that $S(X)$ is order complete and also $X$ is extremally disconnected. Therefore, $S(X)$ and $C^{\infty}(X)$ agree. On the other hand, since $S(X)$ is always laterally complete, it can not be a normed lattice; however, we can have unbounded norm topology ($un$-topology) on it; by the extended $un$-topology considered in \cite{KLT}. These points motivate us to investigate the representation posed by Wickstead for vector lattices, for the case when $E$ is also a Banach lattice.  In fact, we show that when $E$ is an order continuous Banach lattice, there is a representation of $E$ into a $S(X)$-space that is also a homeomorphism (we equip $E$ with the $un$-topology while $S(X)$ enjoys the extended $un$-topology). Furthermore, with using this attitude, we are able to establish a homeomorphism representation for the Fremlin projective tensor product between Banach lattices in terms of $S(X)$-spaces, as well. 
In the sequel, we recall some notes regarding unbounded convergences as well as the Fremlin tensor products between vector and Banach lattices.
\section{preliminaries}
\subsection{unbounded convergences}
Suppose that $E$ is a vector lattice. For a net $(x_{\alpha})$ in $E$, if there is a net $(u_\gamma)$, possibly over a
different index set, with $u_\gamma \downarrow 0$ and for every $\gamma$ there exists $\alpha_0$ such
that $|x_{\alpha} - x| \leq u_\gamma$ whenever $\alpha \geq \alpha_0$, we say that $(x_\alpha)$ converges to $x$ in order, in notation, $x_\alpha \xrightarrow{o}x$. A net $(x_{\alpha})$ in $E$ is said to be unbounded order convergent ($uo$-convergent) to $x \in E$ if for each $u \in E_{+}$, the net $(|x_{\alpha} - x| \wedge u)$ converges to zero in order (for brief, $x_{\alpha}\xrightarrow{uo}x$). For order bounded nets, these notions agree together. For more details on these topics and related notions, see \cite{GTX:17}. Now, assume that $E$ is a Banach lattice. A net $(x_{\alpha})\subseteq E$ is unbounded norm convergent ($un$-convergent) to $x\in E$ provided that for every $u\in E_{+}$, $\||x_{\alpha}-x|\wedge u\|\rightarrow 0$; this convergence is topological; that is $(E,un)$ is a locally solid vector lattice. For more details, see \cite{Den:17, KMT, KLT}. 
Finally, for undefined terminology and general theory of vector lattices and also Banach lattices, we refer the reader to \cite{AB1, AB}.
\subsection{Fremlin tensor product}
In this part, we recall some notes about the Fremlin tensor product between vector lattices and Banach lattices. For more details, see \cite{Fremlin:72, Fremlin:74}. Furthermore, for a comprehensive, new and interesting reference, see \cite{Wickstead1:24}. Moreover, for a short and nicely written exposition on different types of tensor products between Archimedean vector lattices, see \cite{Gr:23}.

Assume that  $E$ and $F$ are Archimedean vector lattices. In 1972, Fremlin constructed a tensor product $E\overline{\otimes} F$ that is an  Archimedean vector lattice such that the algebraic tensor product $E\otimes F$ is a vector subspace of $E\overline{\otimes}F$ so that it is an ordered vector subspace in its own right. Moreover, the vector sublattice in $E\overline{\otimes} F$ generated by $E\otimes F$ is the whole of $E\overline{\otimes}F$. 
Therefore, we conclude that every element of $E\overline{\otimes}F$ can be considered as a finite supremum and finite infimum of some elements of $E\otimes F$. 

 Now, let $E$ and $F$ be Banach lattices. Fremlin in \cite{Fremlin:74} constructed a tensor product $E\widehat{\otimes}F$ that is a Banach lattice. In fact,
 {$E\widehat{\otimes}F$ is the norm completion of $E\otimes F$ with respect to the projective norm: for each $u=\Sigma_{i=1}^{n}x_i\otimes y_i \in E\otimes F$:
 \[\|u\|_{|\pi|}=\sup\{|\Sigma_{i=1}^{n}\phi(x_i,y_i)|: \phi\hspace{0.25cm} \textit{is a bilinear form on}\hspace{0.35cm} E\times F \textit{and} \hspace{0.25cm}\|\phi\|\leq 1\}.\]
 
 Furthermore, $E\overline{\otimes}F$ can be considered as an norm-dense vector sublattice of $E\widehat{\otimes}F$.
 Moreover, the projective norm, $\|.\|_{|\pi|}$, on $E\widehat{\otimes}F$ is a cross norm; that is for every $x\in E$ and for every $y\in F$, we have $\|x\otimes y\|_{|\pi|}=\|x\|\|y\|$. For a comprehensive explanation and also different properties of related to these tensor products, see \cite{Fremlin:72, Fremlin:74}.
 \section{main results}
First, we start with Archimedean vector lattices and $S(X)$-spaces. It is shown in \cite[Proposition 2.5]{Wickstead:24} that $S(X)$-spaces are laterally complete. Now, we proceed to see whether or not this space is order complete.
\begin{lemma}\label{008}
Suppose $X$ is a completely regular topological space. Then $S(X)$ is order complete if and only if $X$ is extremally disconnected.
\end{lemma}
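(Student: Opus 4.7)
For $(\Leftarrow)$, assume $X$ is extremally disconnected. Using the classical extension property --- continuous real-valued functions on open dense subsets of an extremally disconnected space extend uniquely to continuous $[-\infty,\infty]$-valued functions on all of $X$ --- I identify $S(X)$ with the classical $C^{\infty}(X)$-space of continuous extended-real-valued functions finite on an open dense subset. Order completeness of $S(X)$ then follows from the Maeda-Ogasawara theorem, according to which $C^{\infty}(X)$ is Dedekind complete whenever $X$ is extremally disconnected: suprema there are constructed by pointwise supremum followed by regularization, which yields a continuous extended-real-valued function precisely because $X$ is extremally disconnected.

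For $(\Rightarrow)$, assume $S(X)$ is order complete. Since $S(X)$ is already laterally complete by \cite[Proposition 2.5]{Wickstead:24}, order completeness makes $S(X)$ universally complete. By the structure theory for universally complete Archimedean vector lattices with weak order unit, $S(X) \cong C^{\infty}(Y)$ for a unique extremally disconnected space $Y$. Moreover, the natural embedding $S(X) \hookrightarrow C^{\infty}(Y)$ arises from an irreducible continuous map $Y \to X$: every element of $S(X)$ defined on an open dense subset of $X$ pulls back to $Y$ and extends to a continuous extended-real-valued function there. If $S(X)$ is already Dedekind complete, this embedding is surjective, which forces the map $Y \to X$ to be a homeomorphism; since $Y$ is extremally disconnected, so is $X$.

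The main obstacle is the necessity direction --- specifically, invoking the structure theorem for universal completions and identifying $Y$ as an extremally disconnected cover of $X$. A more elementary approach via suprema of specific families (e.g., $A = \{f \in C_b(X) : 0 \le f \le 1,\ f \equiv 0 \text{ on } X \setminus U\}$ for open $U \subseteq X$, then analysing $\sup A$ in $S(X)$) is tempting but gets stuck on the fact that many natural suprema in $S(X)$ exist independently of order completeness; one needs a more delicate argument to expose the genuine use of the order-completeness hypothesis and to upgrade the representative of $\sup A$ from continuity on an open dense subset to continuity on all of $X$.
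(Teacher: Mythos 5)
Your sufficiency direction is essentially the paper's: for extremally disconnected (completely regular) $X$ one identifies $S(X)$ with $C^{\infty}(X)$ — the paper does this by citing \cite[Lemma 1]{Z:24} — and order completeness follows from the classical fact about $C^{\infty}$ over extremally disconnected spaces. Your necessity direction also has the same skeleton as the paper's (lateral completeness plus order completeness gives universal completeness, hence $S(X)\cong C^{\infty}(Q)\cong S(Q)$ for some extremally disconnected compact Hausdorff $Q$), but the step you yourself flag as the obstacle is a genuine gap as written: you assert that the isomorphism ``arises from an irreducible continuous map $Y\to X$'' and that surjectivity ``forces'' this map to be a homeomorphism, and neither claim is self-evident — an abstract lattice isomorphism of function spaces need not a priori be induced by a map of the underlying spaces. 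The paper closes exactly this gap by invoking \cite[Theorems 4.1 and 4.2]{Wickstead:24}: since $S(X)$ and $S(Q)$ carry weak units $\mathbf{1}_X$ and $\mathbf{1}_Q$, the lattice isomorphism $\phi\colon S(X)\to S(Q)$ is necessarily a composition operator $\phi(f)=f\circ\pi$ for a continuous $\pi\colon Q\to X$; restricting $\phi$ to $C(X)$ then gives a lattice isomorphism onto $C(Q)$, whence $X$ and $Q$ are homeomorphic and $X$ inherits extremal disconnectedness. So the missing idea is not a new decomposition or the delicate supremum computation you sketch at the end (that elementary route is not needed), but the representation theorem for morphisms between $S$-spaces; without it, the crux of the whole direction — that the abstract isomorphism is topologically induced — is unsupported.
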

\begin{proof}
Suppose $X$ is extremally disconnected. By \cite[Lemma 1]{Z:24}, $S(X)$ is lattice isomorphic to $C^{\infty}(X)$ so that $S(X)$ is order complete. For the converse, suppose $S(X)$ is order complete. Using \cite[Proposition 2.5]{Wickstead:24}, convinces us that it is universally complete. Therefore, there exists an extremally disconnected compact Hausdorff $Q$ such that $S(X)$ is lattice isomorphic to $C^{\infty}(Q)$; assume that $\phi$ is the desired lattice isomorphism between $S(X)$ and $C^{\infty}(Q)$. On the other hand, by \cite[Lemma 1]{Z:24} again, we can identify $C^{\infty}(Q)$ with $S(Q)$. Note that both $S(X)$ and $S(Q)$ have weak units $\textbf{1}_{X}$ and $\textbf{1}_{Q}$, respectively. Thus, by \cite[Theorem 4.1 and Theorem 4.2]{Wickstead:24}, there is a continuous function $\pi:Q\to X$ such that $\phi(f)=f o\pi$. This shows that the restriction $\phi$ to $C(X)$ maps $C(X)$ into $C(Q)$ and is still a lattice isomorphism. Thus $X$ and $Q$ are homeomorphic so that $X$ is extremally disconnected, as well.
\end{proof}
Suppose $E$ is an Archimedean vector lattice with a weak unit $u$. By \cite[Theorem 3.3]{Wickstead:24}, there exists a compact Hausdorff space $X$ and a lattice isomorphism $T:E\to S(X)$ such that $T(u)={\textbf{1}_{X}}$ and the norm closure (with the sup-norm) of the image of the ideal generated by $u$ in $E$, $E_u$, is $C(X)$. In the following, we show that $T$  preserves  $uo$-convergence.
\begin{lemma}\label{02}
Suppose $E$ is an Archimedean vector lattice $E$ with a weak unit $u$ and $T$ is the representation of $E$ as described in  \cite[Theorem 3.3]{Wickstead:24}. Then $T$ is both $uo$-continuous and order continuous. Moreover, $T^{-1}$ is also $uo$-continuous.
\end{lemma}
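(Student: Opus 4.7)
The strategy is to first establish that $T(E)$ is order dense in $S(X)$; from there all three claims follow from general principles, namely that order-dense sublattices are regular (so the inclusion is order continuous), together with the Gao--Troitsky--Xanthos theorem stating that $uo$-convergence passes freely between a regular sublattice and the ambient space (\cite{GTX:17}).

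The technical heart is to show that $T(E_u)$ is order dense in $C(X)$. Given $0<h\in C(X)$ with $\delta:=\|h\|_\infty$, set $h':=(h-(\delta/2)\mathbf{1}_X)^+$, which is nonzero. Using the hypothesis that $T(E_u)$ is sup-norm dense in $C(X)$, pick $g_1\in T(E_u)$ with $\|g_1-h'\|_\infty<\delta/8$, and set $g:=(g_1-(\delta/8)\mathbf{1}_X)^+$. Since $\mathbf{1}_X=T(u)\in T(E_u)$ and $T(E_u)$ is a vector sublattice, $g\in T(E_u)$; a direct pointwise estimate gives $0<g\le h'\le h$, proving order density in $C(X)$.

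Next, $C(X)$ is order dense in $S(X)$: for $0<f\in S(X)$, choose a representative continuous on some open dense $U\subseteq X$ and a nonempty open $V\subseteq U$ with $f\ge\varepsilon$ on $V$ for some $\varepsilon>0$; applying Urysohn's lemma on the compact Hausdorff space $X$ yields $h\in C(X)$ supported in $V$ with $0\le h\le\varepsilon$ and $h\not\equiv 0$, hence $0<h\le f$ in $S(X)$. By transitivity of order density, $T(E)\supseteq T(E_u)$ is order dense in $S(X)$.

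The remainder is formal. The inclusion $T(E)\hookrightarrow S(X)$ is regular by order density, so $T$ is order continuous: if $x_\alpha\downarrow 0$ in $E$, then $T(x_\alpha)\downarrow 0$ in $T(E)$ since $T$ is a lattice isomorphism, hence in $S(X)$ by regularity. For $uo$-continuity, the Gao--Troitsky--Xanthos theorem (\cite{GTX:17}) gives that for nets in $T(E)$, $uo$-convergence in $T(E)$ coincides with $uo$-convergence in $S(X)$; combined with the fact that the lattice isomorphism $T$ transports $uo$-convergence between $E$ and $T(E)$, one obtains $x_\alpha\xrightarrow{uo}x$ in $E$ if and only if $T(x_\alpha)\xrightarrow{uo}T(x)$ in $S(X)$, giving $uo$-continuity of both $T$ and $T^{-1}$ simultaneously.

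I expect the main obstacle to be upgrading the norm density of $T(E_u)$ in $C(X)$---the only density Wickstead's theorem provides---into genuine order density in $S(X)$. The truncation $g=(g_1-(\delta/8)\mathbf{1}_X)^+$ is what bridges this gap, and it crucially uses that $\mathbf{1}_X=T(u)$ lies in $T(E_u)$, so that $T(E_u)$ is closed under the operation of subtracting a scalar multiple of $\mathbf{1}_X$ and taking positive part.
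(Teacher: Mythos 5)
Your proof is correct and follows essentially the same route as the paper: establish that $T(E_u)$ is order dense in $C(X)$ and that $C(X)$ is order dense in $S(X)$, conclude that $T(E)$ is an order dense (hence regular) sublattice of $S(X)$, and then invoke \cite[Theorem 3.2]{GTX:17} to transfer $uo$-convergence both ways. The only difference is cosmetic: you prove the two density steps directly (via the truncation $(g_1-(\delta/8)\mathbf{1}_X)^+$ and Urysohn's lemma) where the paper cites \cite[Lemma 1.2]{Fremlin:72} and \cite[Lemma 5]{Z:24}.
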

\begin{proof}
By \cite[Theorem 5.2]{B:23}, $uo$-continuity and order continuity of $T$ are equivalent. Moreover, suppose $E_u$ is the ideal generated by $u$ in $E$ and $I_{\textbf{1}_{X}}$ is the ideal in $S(X)$ generated by the constant function $\textbf{1}_{X}$. It is easy to see that $T(E_u)$ is a vector sublattice of $I_{\textbf{1}_{X}}$.  Note that $T(E_u)$ is norm dense in $C(X)$ so that it is order dense by \cite[Lemma 1.2]{Fremlin:72}. On the other hand, by \cite[Lemma 5]{Z:24}, $C(X)$ is order dense in $S(X)$. Therefore, we see that $T(E_u)$ is order dense in $S(X)$. This implies that $T(E)$ is order dense in $S(X)$, as well. By considering \cite[Corollary 3.5]{GTX:17} and \cite[Theorem 3.2]{GTX:17}, we conclude that $uo$-continuity in $E$ and $S(X)$ reduces to order continuity of the restriction of $T$ to $E_u$ onto $T(E_u)$. Now, by \cite[Theorem 2.21]{AB}, we have the desired result. Note that by \cite[Theorem 3.2]{GTX:17}, $T(E)$ is a regular Riesz subspace of $S(X)$ so that by \cite[Theorem 3.2]{GTX:17},  $uo$-convergence has the same meaning in both $T(E)$ and $S(X)$. Therefore, $T^{-1}$ is also $uo$-continuous.
\end{proof}
Now, we extend this result to the case when the Archimedean vector lattice does not have weak units. Before that, we have a simple observation.
\begin{remark}\label{0008}
Suppose $(X_{\alpha})$ is a family of topological spaces and $\sqcup_{\alpha}X_{\alpha}$ is the disjoint unions of them. Then, we can identify $S(\sqcup_{\alpha}X_{\alpha})$ and the Cartesian product $\prod_{\alpha}S(X_{\alpha})$, topologically and ordering. More precisely, for each family $(f_{\alpha})\subseteq \prod_{\alpha}S(X_{\alpha})$, there exists a necessarily unique $f\in S(\sqcup_{\alpha}X_{\alpha})$ such that $f=(f_{\alpha})_{\alpha}$. Furthermore, since the ordering on the Cartesian product is assumed to be componentwise, it is easily seen that $|f|=(|f_{\alpha}|)_{\alpha}$. Moreover, it can be verified that if $Y_{\alpha}$ is an order dense vector sublattice of  $S(X_{\alpha})$, then $\prod_{\alpha}Y_{\alpha}$ is an order dense vector sublattice of $\prod_{\alpha}S(X_{\alpha})=S(\sqcup_{\alpha}X_{\alpha})$.
\end{remark}
\begin{theorem}\label{09}
Suppose $E$ is an  Archimedean vector lattice and $T$ is the representation of $E$ as described in \cite[Corollary 3.6]{Wickstead:24}. Then, $T$ is both $uo$-continuous and order continuous. Moreover, $T^{-1}$ is also $uo$-continuous.
\end{theorem}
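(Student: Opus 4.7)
The plan is to reduce the weak-unit-free case to the weak-unit case handled in Lemma~\ref{02}, using the disjoint-union decomposition of Remark~\ref{0008}. The construction underlying \cite[Corollary~3.6]{Wickstead:24} produces $X$ as a disjoint union $\sqcup_\alpha X_\alpha$ of compact Hausdorff spaces, indexed by a maximal disjoint system $(u_\alpha)$ of positive elements of $E$; on the band $E_\alpha$ generated by $u_\alpha$, which is Archimedean with weak unit $u_\alpha$, Lemma~\ref{02} supplies a lattice isomorphism $T_\alpha:E_\alpha\to S(X_\alpha)$ that is both $uo$-continuous and order continuous, whose image is order dense in $S(X_\alpha)$, and whose inverse is $uo$-continuous.

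Next I would assemble these pieces. By Remark~\ref{0008} we may identify $S(X)=S(\sqcup_\alpha X_\alpha)$ with the product $\prod_\alpha S(X_\alpha)$ as vector lattices, and under this identification $T$ is just the componentwise assembly of the maps $T_\alpha$ (possibly precomposed with the canonical embedding of $E$ into $\prod_\alpha E_\alpha$ coming from band projections onto the principal bands $B_{u_\alpha}$). Since lattice operations, order, and $uo$-convergence in the product $\prod_\alpha S(X_\alpha)$ are pointwise in each coordinate, $uo$-continuity and order continuity of $T$ reduce to the corresponding properties of each $T_\alpha$, which were established in Lemma~\ref{02}. The embedding of $E$ into $\prod_\alpha E_\alpha$ is itself order continuous, because band projections are order continuous and $E$ is order dense in the product of its principal bands generated by a maximal disjoint family.

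To finish the $uo$-continuity of $T^{-1}$, I would argue that $T(E)$ is order dense in $S(X)$. By Remark~\ref{0008}, the product of order dense sublattices is order dense in the product, and each $T_\alpha(E_\alpha)$ is order dense in $S(X_\alpha)$ by the proof of Lemma~\ref{02}; hence $T(E)$ is order dense in $S(X)$, i.e.\ a regular Riesz subspace. Then \cite[Theorem~3.2]{GTX:17} implies that $uo$-convergence has the same meaning in $T(E)$ and in $S(X)$, so the inverse $T^{-1}$, which is $uo$-continuous from $T(E)$ to $E$ by Lemma~\ref{02} applied coordinatewise, is $uo$-continuous from $S(X)$ to $E$.

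The main obstacle I expect is purely bookkeeping: verifying that Wickstead's construction in \cite[Corollary~3.6]{Wickstead:24} really is the disjoint-union/componentwise construction described above, so that Remark~\ref{0008} applies verbatim, and checking that the canonical embedding $E\hookrightarrow\prod_\alpha E_\alpha$ given by band projections is order dense (hence regular) -- this is where one uses that the chosen family $(u_\alpha)$ is a maximal disjoint system. Once those identifications are in place, every remaining step is a direct citation of Lemma~\ref{02}, Remark~\ref{0008}, and \cite[Theorem~3.2]{GTX:17}.
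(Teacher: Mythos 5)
Your proposal is correct and follows essentially the same route as the paper: decompose $E$ into pairwise disjoint bands with weak units, apply Lemma~\ref{02} to each band, identify $S(\sqcup_\alpha X_\alpha)$ with $\prod_\alpha S(X_\alpha)$ via Remark~\ref{0008}, deduce order density (hence regularity) of $T(E)$ in $S(X)$, and transfer $uo$-convergence with \cite[Theorem~3.2]{GTX:17}. Your extra attention to the order continuity of the embedding $E\hookrightarrow\prod_\alpha E_\alpha$ is a reasonable refinement of a step the paper leaves implicit, but it does not change the argument.
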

\begin{proof}
Again by \cite[Theorem 5.2]{B:23}, $uo$-continuity and order continuity of $T$ are equivalent. We show that $T$ is order continuous. We have a decomposition of $E$ into direct sums of pairwise disjoint bands $(B_{\alpha})_{\alpha \in I}$, each of them has a weak unit, namely, $x_{\alpha}$ (by \cite[Theorem 28.5]{LZ}). By Lemma \ref{02}, there are compact Hausdorff spaces $X_{\alpha}$ and order continuous lattice isomorphisms $T_{\alpha}:B_{\alpha} \to S(X_{\alpha})$. Put $X=\sqcup_{\alpha}X_{\alpha}$,  the Hausdorff space of the disjoint unions of $X_{\alpha}'s$. By using \cite[Corollary 3.6]{Wickstead:24}, we see that $T(x)=(T_{\alpha}(y_{\alpha}))_{\alpha}$, in which $x=\vee_{\alpha}y_{\alpha}$ and $y_{\alpha}\in B_{\alpha}$. By \cite[Theorem 3.2]{GTX:17}, it is enough to show that $T(E)$ is a regular vector sublattice of $S(\sqcup_{\alpha}X_{\alpha})$. Note that for each $\alpha$, by Lemma \ref{02}, $T_{\alpha}(B_{\alpha})$ is order dense in $S(X_{\alpha})$. Therefore, form Remark \ref{0008} the conclusion follows.

 \end{proof}
 Among the proof of Theorem \ref{09}, we see that $E$ (identifying with $T(E)$) is an order dense vector sublattice of $S(X)$.  So, by using \cite[Theorem 2.31]{AB}, the following result follows.
 \begin{corollary}\label{007}
 Suppose $E$ is an order complete vector lattice and $T:E\to S(X)$ is its representation as described in Theorem \ref{09}. Then, $E$ can be considered as an ideal of $S(X)$.
 \end{corollary}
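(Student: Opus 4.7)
The proof plan is short because the heavy lifting has already been done in Theorem~\ref{09} and the remark just before the corollary. The strategy is to combine three ingredients: (1) the representation $T\colon E\to S(X)$ is a lattice isomorphism onto its image, (2) the image $T(E)$ is order dense in $S(X)$, and (3) an order complete order dense vector sublattice of an Archimedean vector lattice is automatically an ideal, which is the content of \cite[Theorem 2.31]{AB}.

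First I would make explicit the observation that was highlighted in the sentence preceding the corollary. In the proof of Theorem~\ref{09}, it was shown that if $E=\bigoplus_{\alpha\in I}B_\alpha$ is the band decomposition with each $B_\alpha$ having a weak unit $x_\alpha$, then $T_\alpha(B_\alpha)$ is order dense in $S(X_\alpha)$ (via Lemma~\ref{02}), and hence by Remark~\ref{0008} the product $\prod_\alpha T_\alpha(B_\alpha)$ is order dense in $\prod_\alpha S(X_\alpha)=S(\sqcup_\alpha X_\alpha)=S(X)$. Since $T(E)$ agrees with this product inside $S(X)$ by the description $T(x)=(T_\alpha(y_\alpha))_\alpha$, we conclude that $T(E)$ is an order dense vector sublattice of $S(X)$.

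Next, I would transfer order completeness from $E$ to $T(E)$. Because $T$ is a lattice isomorphism of $E$ onto $T(E)$, every lattice-theoretic property is preserved; in particular, $T(E)$ is itself order complete as a vector lattice in its own right (this is purely formal and requires no argument beyond the fact that $T$ and $T^{-1}$ preserve suprema and infima of subsets that possess them).

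Finally, with $T(E)$ order dense in $S(X)$ and $T(E)$ order complete, I would directly apply \cite[Theorem 2.31]{AB}, which states that an order complete, order dense vector sublattice of an Archimedean vector lattice is necessarily an ideal. Since $S(X)$ is Archimedean, this yields that $T(E)$ is an ideal of $S(X)$, and by identifying $E$ with $T(E)$ the corollary follows. There is no real obstacle here: all the work has been distributed into Lemma~\ref{02}, Remark~\ref{0008}, and the proof of Theorem~\ref{09}; the only thing to be careful about is to explicitly note that order completeness passes through the lattice isomorphism $T$ before invoking the cited theorem from \cite{AB}.
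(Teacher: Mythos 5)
Your proposal is correct and follows essentially the same route as the paper: the order density of $T(E)$ in $S(X)$ extracted from the proof of Theorem~\ref{09}, combined with \cite[Theorem 2.31]{AB} applied to the order complete sublattice $T(E)$. The extra care you take in noting that order completeness passes through the lattice isomorphism is a reasonable (if routine) elaboration of the paper's one-line argument.
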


 Now, as an application, we present a representation approach for the Fremlin tensor product between Archimedean vector lattices. For details see \cite{Fremlin:72}; for a new and more illustrative approach, see \cite{Wickstead1:24}.
 \begin{corollary}\label{08}
Suppose $E$ and $F$ are Archimedean vector lattices. Moreover, assume that $S(X)$ and $S(Y)$ are the corresponding representations of $E$ and $F$, respectively as described in Theorem \ref{09}. Then, there exists a similar representation for the Fremlin tensor product $E\overline{\otimes}F$ in $S(X\times Y)$, as well.
\end{corollary}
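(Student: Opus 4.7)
The plan is to factor the sought representation through the representations of $E$ and $F$ given by Theorem~\ref{09} and then invoke the Buskes--Wickstead framework for Fremlin tensor products in $S(\cdot)$-spaces from \cite{BW:17, Z:24}. Concretely, I fix the lattice isomorphisms $T_E\colon E\to S(X)$ and $T_F\colon F\to S(Y)$ from Theorem~\ref{09}, whose images are order dense vector sublattices. The pointwise-product rule $(f,g)\mapsto\bigl((x,y)\mapsto f(x)g(y)\bigr)$ defines a lattice bimorphism $S(X)\times S(Y)\to S(X\times Y)$: if $f$ lives on an open dense $U\subseteq X$ and $g$ on an open dense $V\subseteq Y$, then $U\times V$ is open dense in $X\times Y$, so the product is well-defined as an equivalence class in $S(X\times Y)$. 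Composing with $T_E\times T_F$ and appealing to the universal property of the Fremlin tensor product, I obtain a lattice homomorphism $T\colon E\overline{\otimes}F\to S(X\times Y)$ sending the simple tensor $e\otimes f$ to the pointwise product $T_E(e)\cdot T_F(f)$.

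Next, I would verify that $T$ is injective with order dense image. Injectivity follows by threading Fremlin's embedding $E\overline{\otimes}F\hookrightarrow S(X)\overline{\otimes}S(Y)$ through the canonical injective map $S(X)\overline{\otimes}S(Y)\to S(X\times Y)$ established in \cite{BW:17}. Order density uses the fact (\cite{BW:17}, see also \cite{Z:24}) that if $A$ is order dense in $S(X)$ and $B$ is order dense in $S(Y)$, then $A\overline{\otimes}B$ is order dense in $S(X\times Y)$; this applies verbatim to $A=T_E(E)$ and $B=T_F(F)$. Once the image of $T$ is order dense, and hence regular, the $uo$- and order continuity of $T$ and $T^{-1}$ drop out exactly as in the proof of Theorem~\ref{09}: \cite[Theorem 3.2]{GTX:17} transfers $uo$-convergence across the inclusion $T(E\overline{\otimes}F)\subseteq S(X\times Y)$, and \cite[Theorem 5.2]{B:23} equates this with order continuity.

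The principal obstacle is the injectivity and order density of the pointwise-product map $S(X)\overline{\otimes}S(Y)\to S(X\times Y)$, which is the heart of the Buskes--Wickstead construction and the only place where something genuinely non-formal happens. I would invoke \cite{BW:17} directly for this; if a self-contained argument were desired, a natural route is to reduce to the principal ideals generated by weak units---where both sides restrict to the Kakutani-type embedding $C(K_1)\overline{\otimes}C(K_2)\hookrightarrow C(K_1\times K_2)$---and then bootstrap to the full tensor product using order density of $E_u$ in $E$ and of $F_v$ in $F$, together with the compatibility of Fremlin's tensor product with order density of sublattices.
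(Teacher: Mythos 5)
Your proposal follows essentially the same route as the paper: both induce a lattice homomorphism $E\overline{\otimes}F\to S(X\times Y)$ from the representations of Theorem~\ref{09}, establish order density of the image via the Buskes--Wickstead/Zabeti lemmas on $A\overline{\otimes}B\subseteq S(X\times Y)$, and transfer $uo$-convergence by regularity using \cite[Theorem 3.2]{GTX:17}. The only substantive difference is that the paper makes injectivity explicit with a short positive-element argument (every $0<u\in E\overline{\otimes}F$ dominates a nonzero elementary tensor by \cite[Theorem 4.2(4)]{Fremlin:72}), whereas you delegate this to the embedding results of \cite{BW:17}; both are acceptable.
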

\begin{proof}
By Theorem \ref{09}, there are Hausdorff spaces $X$ and $Y$ and order continuous lattice isomorphisms $T:E\to S(X)$ and $S:F\to S(Y)$. Consider the bi-injective lattice bimorphism defined via $(x,y)\to T(x)\otimes S(y)$ from $E\times F$ into $S(X\times Y)$; it induces a lattice homomorphism $T\otimes S:E\overline{\otimes }F\to S(X\times Y)$  defined via $(T\otimes S)(x\otimes y)=T(x)\otimes S(y)$. We show that $T\otimes S$ is also one-to-one. First, assume that $T(x\otimes y)=T(x)\otimes S(y)=0$. By \cite[Proposition 3.1]{BW:17}, we see that $T(x)=0$ or $S(y)=0$. Therefore, $x=0$ or $y=0$ so that $x\otimes y=0$. Assume that $0\neq u\in (E\overline{\otimes}F)_{+}$ with $(T\otimes S)(u)=0$, by \cite[Theorem 4.2 (4)]{Fremlin:72}, there exist $x_0\in E_{+}$ and $y_0\in F_{+}$ with $0<x_0\otimes y_0\leq u$ so that $T(x_0)\otimes S(y_0)=0$. Therefore, by the previous part, $x_0\otimes y_0=0$ which is a contradiction. Thus, $T(E)$ and $S(F)$ can be considered as order dense vector sublattices of $S(X)$ and $S(Y)$, respectively. By \cite[Lemma 7 and Lemma 8]{Z:24}, we conclude that $T(E)\overline{\otimes}S(F)$ can be considered as an order dense vector sublattice of $S(X\times Y)$. It can be verified that $(T\otimes S)(E\overline{\otimes} F)$ is an order dense vector sublattice of $T(E)\overline{\otimes}S(F)$. Now, \cite[Theorem 3.2]{GTX:17} convinces that $uo$-continuity can be transferred between $E\overline{\otimes}F$ and $S(X\times Y)$ via $T\otimes S$.
\end{proof}
It is known that the universal completion of an Archimedean vector lattice $E$, $E^u$, can be identified with a $C^{\infty}(\Omega)$-space, in which, $\Omega$ is an extremally disconnected compact Hausdorff topological space. On the other hand, we can have the lateral completion of $E$, $E^{\lambda}$, in a similar manner: the intersection of all laterally complete Archimedean vector lattices that contain $E$ as a vector sublattice. By \cite[Page 213, Exercise 10]{AB1}, $(E^{\delta})^{\lambda}=(E^{\lambda})^{\delta}=E^u$, in which, $E^{\delta}$ is the order completion of $E$. It is interesting to know that the lateral completion also has the form of a $S(X)$-space.
\begin{corollary}\label{11}
Suppose $E$ is an order complete vector lattice and $T$ is the lattice isomorphism from $E$ into $S(X)$ as described in \cite[Corollary 3.6]{Wickstead:24}. Then, $E^{\lambda}=E^u=S(X)$ and $X$ is extremally disconnected.
\end{corollary}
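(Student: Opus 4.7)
The plan is to reduce to the case of a single band, show that each resulting space $X_{\alpha}$ is extremally disconnected, and then invoke Lemma~\ref{008} followed by the uniqueness of the universal completion.

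Recall from the proof of Theorem~\ref{09} that $X = \sqcup_{\alpha} X_{\alpha}$, where each $X_{\alpha}$ is the compact Hausdorff space supplied by Lemma~\ref{02} applied to a band $B_{\alpha}$ of $E$ with weak unit $x_{\alpha}$, and in which $T_{\alpha}((B_{\alpha})_{x_{\alpha}})$ is norm dense in $C(X_{\alpha})$. Since $E$ is order complete, so is each band $B_{\alpha}$, and in particular the principal ideal $(B_{\alpha})_{x_{\alpha}}$. An order complete AM-space with order unit is complete in its order unit norm, so the norm-dense image $T_{\alpha}((B_{\alpha})_{x_{\alpha}})$ is actually all of $C(X_{\alpha})$. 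Being lattice isomorphic to the order complete space $(B_{\alpha})_{x_{\alpha}}$, the space $C(X_{\alpha})$ is itself order complete, which forces $X_{\alpha}$ to be extremally disconnected. Since disjoint unions of extremally disconnected spaces are extremally disconnected and $X$ is locally compact Hausdorff (in particular completely regular), Lemma~\ref{008} yields that $S(X)$ is order complete.

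Combined with lateral completeness from \cite[Proposition~2.5]{Wickstead:24}, $S(X)$ is universally complete. Since $E$ embeds as an order dense vector sublattice of $S(X)$ (from the proof of Theorem~\ref{09} via Remark~\ref{0008}), the uniqueness of the universal completion gives $S(X) = E^{u}$. Finally, $E$ being order complete forces $E^{\delta} = E$, and the identity $(E^{\delta})^{\lambda} = E^{u}$ from \cite[page~213, Exercise~10]{AB1} then yields $E^{\lambda} = E^{u} = S(X)$, as claimed.

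The main obstacle lies in verifying that each $X_{\alpha}$ is extremally disconnected: this requires carefully transferring the order completeness of $(B_{\alpha})_{x_{\alpha}}$ through its Kakutani-type representation to the full space $C(X_{\alpha})$, leveraging both the norm-density assertion of Lemma~\ref{02} and the fact that an order complete AM-space with order unit is automatically norm complete in the order unit norm.
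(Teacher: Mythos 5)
Your argument is correct, but it runs in the opposite direction from the paper's. The paper first identifies $S(X)$ with $E^{u}$ by purely order-theoretic means and only then deduces that $X$ is extremally disconnected: it uses the extension theorem \cite[Theorem 2.32]{AB} (valid because $E$ is order complete, $T$ is order continuous, and $S(X)$ is laterally complete) to extend $T$ to an order continuous lattice isomorphism of $E^{\lambda}=E^{u}$ into $S(X)$, shows $E^{u}$ is order dense there via Corollary~\ref{007}, concludes $S(X)=E^{u}$ from \cite[Theorem 7.15]{AB1}, and finally reads off extremal disconnectedness of $X$ from Lemma~\ref{008}. You instead establish the topological statement first and let the order-theoretic identification follow: working band by band, you observe that each principal ideal $(B_{\alpha})_{x_{\alpha}}$ is order complete, hence uniformly complete, hence norm complete in its order unit norm, so that its isometric, norm-dense image in $C(X_{\alpha})$ is all of $C(X_{\alpha})$; the Nakano--Stone theorem then makes each $X_{\alpha}$, and hence the disjoint union $X$, extremally disconnected, after which Lemma~\ref{008}, lateral completeness, order density of $T(E)$, and the uniqueness of the universal completion over an order dense sublattice give $S(X)=E^{u}=E^{\lambda}$. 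Your route avoids \cite[Theorem 2.32]{AB} and \cite[Theorem 7.15]{AB1} entirely and yields a concrete topological explanation for why $X$ is extremally disconnected, at the cost of re-entering the band decomposition and the Kakutani-type representation; the paper's route is shorter and stays at the level of abstract completions. Two small points you should make explicit: that $T_{\alpha}$ restricted to $(B_{\alpha})_{x_{\alpha}}$ is an isometry for the order unit norm versus the sup norm (immediate since $T_{\alpha}(x_{\alpha})=\mathbf{1}_{X_{\alpha}}$ and the norm is determined by the order and the unit), and that ``order complete with order unit implies norm complete'' is really the chain order completeness $\Rightarrow$ uniform completeness $\Rightarrow$ completeness of the order unit norm, since calling the space an AM-space before knowing it is norm complete is circular phrasing.
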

\begin{proof}
By Theorem \ref{09}, $T$ is an order continuous lattice isomorphism. Since by \cite[Proposition 2.5]{Wickstead:24}, $S(X)$ is laterally complete and by the assumption, $E$ is order complete, by \cite[Theorem 2.32]{AB}, $T$ can have an extension to an order continuous lattice isomorphism (denoted by $T$, again) from $E^{\lambda}=E^u$ into $S(X)$. By using Corollary \ref{007} and also \cite[Theorem 3.2]{GTX:17}, we can see that $E^u$ is an order dense vector sublattice of $S(X)$. By \cite[Theorem 7.15]{AB1}, $E^{\lambda}=E^u$ and $S(X)$ are in fact the same. Therefore, by  Lemma \ref{008}, $X$ is extremally disconnected.
\end{proof}
Now, we are going to establish the main results of the note. First, we need some preliminaries.

Suppose $X$ is a topological space. Note that $S(X)$ can not be a normed lattice; nevertheless, by the extended $un$-convergence procedure described in \cite{KLT}, we can have $un$-topology on $S(X)$: suppose $Y$ is an Archimedean vector lattice and $E$ is a normed lattice which is an ideal in $Y$, as well. For a net $(y_{\alpha})\subseteq Y$, we say $y_{\alpha}$ is unbounded norm convergent ($un$-convergent) to $y\in Y$ if for every $x\in E_{+}$, $\||y_{\alpha}-y|\wedge x\|\rightarrow 0$. This convergence induces a topology on $Y$ which is called "the extended $un$-topology" on $Y$ induced by $E$. However, this convergence is dependent on the ideal $E$; moreover, it may not be Hausdorff, in general. The good news is that the extended $un$-topology induced by an ideal is Hausdorff if and only if the ideal is order dense (see \cite[Proposition 1.4]{KLT}).

Observe that in general, the extended $un$-convergence is not a linear topology so that we miss many considerable results regarding locally solid vector lattices. However, when $E$ is an order continuous ideal in an Archimedean vector lattice $Y$, then, the extended $un$-topology on $Y$ induced by $E$ is linear  so that $(Y,un-E)$ is a locally solid vector lattice, as well; see \cite[Example 1.5]{KLT} and discussion after that for more details. Now, we present a representation theorem for order continuous Banach lattices.

\begin{theorem}\label{000}
Suppose $E$ is an order continuous Banach lattice. Then, there exists a Hausdorff topological space $X$  and a lattice isomorphism homeomorphism $T:(E, un)\to (S(X), \tau_E)$, in which $S(X)$ is equipped with the extended $un$-topology induced by $E$.
\end{theorem}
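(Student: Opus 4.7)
The plan is to extract $X$ and $T$ directly from Theorem \ref{09} and then show that, under the additional hypotheses on $E$, the resulting representation is automatically a topological embedding. Applying Theorem \ref{09} to $E$ yields a Hausdorff space $X$ and a lattice isomorphism $T : E \to S(X)$ whose image $T(E)$ is order dense in $S(X)$. Since an order continuous Banach lattice is order complete, Corollary \ref{007} upgrades $T(E)$ from an order dense sublattice to an order dense \emph{ideal} of $S(X)$. Transporting the norm of $E$ to $T(E)$ via $T$ turns $T(E)$ into an order continuous normed ideal of $S(X)$. This is precisely the setting of \cite{KLT}: by \cite[Proposition 1.4]{KLT}, the induced extended $un$-topology $\tau_E$ on $S(X)$ is Hausdorff (because $T(E)$ is order dense), and by the discussion after \cite[Example 1.5]{KLT}, it is linear (because the ideal's norm is order continuous), so $(S(X), \tau_E)$ is a Hausdorff locally solid vector lattice.

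It then remains to check that $T$ is a homeomorphism between $(E, un)$ and $(S(X), \tau_E)$ onto its image. This should follow almost immediately from the definitions together with the fact that $T$ is an isometric lattice isomorphism from $E$ onto $T(E)$. Indeed, for any $x, x_\alpha \in E$ and any $u \in E_+$,
\[
\bignorm{\abs{x_\alpha - x} \wedge u}_E = \bignorm{T(\abs{x_\alpha - x} \wedge u)}_{T(E)} = \bignorm{\abs{T(x_\alpha) - T(x)} \wedge T(u)}_{T(E)},
\]
and as $u$ ranges over $E_+$ the element $T(u)$ ranges over all of $T(E)_+$. Hence the ordinary $un$-convergence of $(x_\alpha)$ to $x$ in $E$ is equivalent to the extended $un$-convergence of $(T(x_\alpha))$ to $T(x)$ in $S(X)$; since both topologies are linear and locally solid, this net-level equivalence promotes to the desired homeomorphism onto the image.

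The main conceptual step is the passage from ``order dense sublattice'' to ``order dense ideal'', supplied by Corollary \ref{007} and relying on order completeness of $E$ (which comes free from order continuity of the Banach lattice norm). This ideal property is what guarantees both that the transported norm extends meaningfully to lattice truncations $\abs{y} \wedge u$ with $y \in S(X)$ and $u \in T(E)_+$ (since such truncations lie in $T(E)$), and that the framework of \cite{KLT} applies with $T(E)$ playing the role of the distinguished normed ideal. Once these setup steps are in place, the homeomorphism assertion reduces to the one-line computation above.
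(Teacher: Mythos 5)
Your proposal is correct, and it reaches the conclusion by a genuinely shorter route than the paper. The paper's proof first treats the case where $E$ has a quasi-interior point $u$ (using \cite[Proposition 3.1]{KLT} to test $un$-convergence against the single elements $u$ and $\mathbf{1}_X$), and then handles the general case by a dense band decomposition from \cite[Proposition 1.a.9]{LT}, passing to $S(\sqcup_\alpha X_\alpha)\cong\prod_\alpha S(X_\alpha)$ and identifying the extended $un$-topology with the product topology via Lebesgue/pre-Lebesgue and completeness arguments, before gluing with \cite[Theorem 4.12]{KMT}. You bypass all of this by the observation that once $T(E)$ is an order dense \emph{ideal} of $S(X)$ (Corollary \ref{007}, which is where order completeness, hence order continuity of the Banach lattice norm, enters) carrying the transported norm, the extended $un$-topology $\tau_E$ restricted to $T(E)$ is, essentially by definition, the transported $un$-topology of $E$: the defining neighborhoods $\{y:\norm{\abs{y}\wedge v}<\varepsilon\}$, $v\in T(E)_+$, intersected with $T(E)$ are exactly the $un$-neighborhoods of $T(E)$, and your displayed identity makes this precise at the level of nets. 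This is a legitimate tautology, not a gap. What the paper's longer argument buys is auxiliary structural information (metrizability in the weak-unit case, the identification $\tau=\tau_E$ with a product topology, the Lebesgue property of $\tau_E$) that is reused in the proof of Theorem \ref{009}; your argument does not produce that, but the statement of Theorem \ref{000} does not require it. One shared caveat: like the paper's own proof, yours establishes that $T$ is a homeomorphism onto its image $T(E)$ with the subspace topology, not onto all of $S(X)$; the theorem's wording should be read in that embedding sense, consistent with the paper's use of ``lattice isomorphism'' for injective maps.
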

\begin{proof}

First, assume that $E$ has a weak unit (quasi-interior point) $u$ and  note that we can identify the image of $E$ under $T$ ($T(E)$) with $E$ (topologically and ordering). Consider order continuous lattice isomorphism $T:E\to S(X)$ as described in Theorem \ref{09}.  $T(E)$ can be considered an an order dense vector sublattice of $S(X)$ so that by \cite[Theorem 2.31]{AB}, it is an ideal. So, we can consider the extended $un$-topology on $S(X)$ induced by $E$. Since $E$ is order continuous, the induced topology on $S(X)$ is linear; that is $S(X)$ is a locally solid vector lattice. By \cite[Theorem 5.19]{AB1}, $T$ is continuous. Furthermore,  the extended $un$-topology is also metrizable by \cite[Theorem 3.3]{KLT}. So, by \cite[Theorem 7.55]{AB}, it is also Lebesgue.


Now, suppose $(f_{\alpha})\subseteq E$ is $un$-null so that $\|f_{\alpha}\wedge u\|\rightarrow 0$. Therefore $\|T(f_{\alpha})\wedge \textbf{1}_{X}\|\rightarrow 0$. Now, \cite[Proposition 3.1]{KLT} implies that $T(f_{\alpha})\xrightarrow{un-E}0$ in $S(X)$.  Now assume that a net $(g_{\alpha})\subseteq T(E)$ is $un$-null. There exists a net $(f_{\alpha})\subseteq E$ with $T(f_{\alpha})=g_{\alpha}$. Since $g_{\alpha}\xrightarrow{un}0$, we conclude that $\|g_{\alpha}\wedge \textbf{1}_{X}\|_{T(E)}=\|T(f_{\alpha}\wedge u)\|_{T(E)}=\|f_{\alpha}\wedge u\|_{E}\rightarrow 0$. Thus, $f_{\alpha}\wedge u\xrightarrow{un}0$ so that $(f_{\alpha})$ is $un$-null since $u$ is a quasi-interior point.

Now, we proceed with the general case. Suppose $E$ is an order continuous Banach lattice. By \cite[Proposition 1.a.9]{LT}, it possesses a dense band decomposition. More precisely, there exists a pairwise disjoint family of bands $\textbf{B}$ such that every band $B_{\alpha}$ in $\textbf{B}$ has a weak unit (quasi-interior point, namely, $x_{\alpha}$) and $E$ is the closure of the direct sums of all elements in $\textbf{B}$. By the former case, there exist compact Hausdorff spaces $X_{\alpha}$ and  lattice isomorphisms $T_{\alpha}:B_{\alpha}\to S(X_{\alpha})$ that are $un$-homeomorphisms. By using \cite[Thoerem 2.14]{AB}, we conclude that for each $B_{\alpha_1}, B_{\alpha_2}\in \textbf{B}$, $T_{\alpha_1}(B_{\alpha_1})\wedge T_{\alpha_2}(B_{\alpha_2})=0$. Now, define the lattice isomorphism $T:E\to S(\sqcup_{\alpha} X_{\alpha})$ defined via $T(x)=T(\Sigma_{\alpha}y_{\alpha})=T(\bigvee_{\alpha }y_{\alpha})=\bigvee_{\alpha}(T_{\alpha}(y_{\alpha}))$.
Note that $S(\sqcup_{\alpha}X_{\alpha})$ can be identified with $\prod_{\alpha}S(X_{\alpha})$ by Remark \ref{0008}.

For each $\alpha$, assume that $P_{\alpha}$ is the natural band projection from $E$ onto $B_{\alpha}$; $B_{\alpha}$ is an order continuous Banach lattice with a quasi-interior point that induces a Hausdorff locally solid Lebesgue topology $\tau_{\alpha}$ on $S(X_{\alpha})$ by the former case. Now, consider the product topology $\tau$ on $S(\sqcup_{\alpha}X_{\alpha})=\prod_{\alpha}S(X_{\alpha})$; it is a Hausdorff locally solid topology by \cite[Theorem 2.20]{AB1} and also Lebesgue by \cite[Theorem 3.11]{AB1}. On the other hand, we have the extended $un$-topology $\tau_E$ on $S(\sqcup_{\alpha}X_{\alpha})$ that is Hausdorff (since $E$ is order dense in $S(X)$), locally solid (since $E$ is order continuous). On the other hand, by Proposition \ref{11} and also by using \cite[Theorem 6.7]{KLT}, $S(\sqcup_{\alpha}X_{\alpha})$ is $un$-complete (with respect to the induced topology $\tau_{E}$). Furthermore, by \cite[Proposition 9.1]{KLT}, $(S(\sqcup_{\alpha}X_{\alpha}),\tau_E)$ satisfies the pre-Lebesgue property so that by \cite[Theorem 3.26]{AB1}, $\tau_E$ is also Lebesgue. Therefore, by \cite[Theorem 7.53]{AB1}, we have $\tau=\tau_E$.

 Suppose $x_{\beta}\xrightarrow{un}0$ in $E$.  By \cite[Theorem 4.12]{KMT}, $P_{\alpha}(x_{\beta})\xrightarrow{un}0$ in $B_{\alpha}$. By the former case, $T_{\alpha}P_{\alpha}(x_{\beta_n})\xrightarrow{un}0$ in $S(X_{\alpha})$. Thus, $T(x_{\beta})=(T_{\alpha}P_{\alpha}(x_{\beta}))_{\alpha}\xrightarrow{\tau}0$.


For the converse, assume that a net $T(x_{\beta})\subseteq T(E)$ is $\tau$-null. Assume that $x_{\beta}=(y^{\beta}_{\alpha})_{\alpha}$, in which $y^{\beta}_{\alpha}\in B_{\alpha}$. Therefore, $T_{\alpha}(y^{\beta}_{\alpha})=T_{\alpha}P_{\alpha}(x_{\beta})\xrightarrow{un}0$ in $S(X_{\alpha})$.  By the former case, $P_{\alpha}(x_{\beta})=y^{\beta}_{\alpha}\xrightarrow{un}0$ in $B_{\alpha}$.
 Now, \cite[Theorem 4.12]{KMT}, convinces us that $x_{\beta}\xrightarrow{un}0$ in $E$ as claimed. 

\end{proof}
As an application, we establish a $un$-homeomorphism representation for the Fremlin projective tensor product of Banach lattices; for more details, see \cite{Fremlin:74}.
Before that, we show that quasi-interior points can be preserved by the Fremlin projective tensor product $E\widehat{\otimes}F$.
\begin{proposition}\label{0000}
Suppose $E$ and $F$ are Banach lattices with quasi-interior points. Then, the Fremlin projective tensor product $E\widehat{\otimes}F$ has a quasi-interior point, as well.
\end{proposition}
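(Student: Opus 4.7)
My plan is to show that the simple tensor $u\otimes v$ is itself a quasi-interior point of $E\widehat{\otimes}F$, where $u$ and $v$ are quasi-interior points of $E$ and $F$, respectively. Recall this is equivalent to showing that the principal ideal generated by $u\otimes v$ is norm dense in $E\widehat{\otimes}F$.

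The first step is to check that every positive elementary tensor lies in the norm-closure of this ideal. Fix $x\in E_{+}$ and $y\in F_{+}$ and set $a_n:=x\wedge nu$ and $b_n:=y\wedge nv$. Since $u,v$ are quasi-interior, $\|a_n-x\|\to 0$ and $\|b_n-y\|\to 0$. Moreover
$$0\le a_n\otimes b_n\le (nu)\otimes(nv)=n^{2}(u\otimes v),$$
so each $a_n\otimes b_n$ belongs to the principal ideal generated by $u\otimes v$. Using the triangle inequality together with the cross-norm property of $\|\cdot\|_{|\pi|}$ and the bound $\|a_n\|\le\|x\|$, I would estimate
$$\|a_n\otimes b_n - x\otimes y\|_{|\pi|}\le\|a_n\|\,\|b_n-y\|+\|a_n-x\|\,\|y\|\longrightarrow 0,$$
so $x\otimes y$ lies in the norm-closure of the ideal.

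Next, I would use that the norm closure of an order ideal in a Banach lattice is again a closed Riesz subspace (since linear operations and $\vee,\wedge$ are norm continuous). Hence that closure contains all linear combinations of positive elementary tensors, and being closed under lattice operations it contains the vector sublattice generated by $E\otimes F$, which by Fremlin's construction is exactly $E\overline{\otimes}F$. Because $E\overline{\otimes}F$ is norm dense in $E\widehat{\otimes}F$, the norm-closure of the ideal must equal $E\widehat{\otimes}F$, proving $u\otimes v$ is a quasi-interior point.

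The only real technical content is the estimate in the first step; the rest is density plus the general fact that closures of ideals remain Riesz subspaces. The cross-norm property of $\|\cdot\|_{|\pi|}$ recorded in the preliminaries is precisely what makes the convergence $a_n\otimes b_n\to x\otimes y$ go through, so no deeper input is needed.
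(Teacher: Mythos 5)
Your proof is correct, but it takes a genuinely different route from the paper's. You use the characterization of a quasi-interior point as one whose principal ideal is norm dense, and verify it directly: the cross-norm estimate gives $(x\wedge nu)\otimes(y\wedge nv)\to x\otimes y$, these approximants lie in the ideal generated by $u\otimes v$ because $0\le (x\wedge nu)\otimes(y\wedge nv)\le n^2(u\otimes v)$ (positivity of the canonical bilinear map), and the closed ideal therefore contains every positive elementary tensor, hence $E\otimes F$, hence everything. In fact your second step can be shortened: since $E\widehat{\otimes}F$ is by definition the completion of $E\otimes F$ under $\|\cdot\|_{|\pi|}$, once the closure of the ideal contains the linear span of the positive elementary tensors you are done; the detour through the sublattice generated by $E\otimes F$ and through $E\overline{\otimes}F$ is harmless but unnecessary. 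The paper instead uses the dual characterization (a positive $w$ is quasi-interior iff $f(w)>0$ for every nonzero positive functional $f$), together with Fremlin's identification $(E\widehat{\otimes}F)'=B^{r}(E\times F)$ and a one-variable-at-a-time argument based on $x_1\wedge nx_0\to x_1$. Your argument is more elementary and self-contained, needing only the cross-norm property and the density of $E\otimes F$, both recorded in the preliminaries; the paper's argument leans on the duality theorem for the projective tensor product but avoids any explicit norm estimate. Both identify the same quasi-interior point, namely $u\otimes v$ itself.
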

\begin{proof}
Assume that $E$ has a quasi-interior $x_0$ and $F$ possesses a quasi-interior point $y_0$. We show that $x_0\otimes y_0$ is a quasi-interior point for  $E\widehat{\otimes}F$. By \cite[Theorem 4.85]{AB}, it is enough to show that for every $0<f\in  (E\widehat{\otimes}F)'$, $f(x_0\otimes y_0)>0$. By \cite[1(A) d]{Fremlin:74}, $(E\widehat{\otimes}F)'=B^{r}(E\times F)$ where $B^{r}(E\times F)$ is the Banach lattice of all bounded regular bilinear forms on $E\times F$. There exist $0 \neq x_1\in E_{+}$ and $0 \neq y_1 \in F$ with $f(x_1,y_1)> 0$ so that $f(x_1,y_0)\neq 0$ since the restriction $f$ to $x_1$, is a non-zero positive functional on $Y$. On a contrary, assume that $f(x_0,y_0)=0$. Note that $x_1\wedge n x_0\rightarrow x_1$ so that $f(x_1\wedge n x_0, y_0)\rightarrow f(x_1,y_0)$, since $f$ is continuous. Note that $f(x_1\wedge n x_0,y_0)\leq n f(x_0,y_0)=0$ which is a contradiction.
\end{proof}
Suppose $E$ is a Banach lattice. Recall that $E$ possesses a dense band decomposition if there exists a family  $\textbf{B}$ of pairwise disjoint projection bands in $E$ such that the linear span of all of the bands in $\textbf{B}$ is norm dense in $E$. For more details see \cite[Section 4.1]{KMT}. For example by \cite[Proposition 1.a.9]{LT}, every order continuous Banach lattice possesses a dense band decomposition; see also \cite[Theorem 4.11]{KMT}. In the following, we show that if Banach lattices $E$ and $F$ have dense band decompositions, then, so is the Fremlin projective tensor product $E\widehat{\otimes}F$.
\begin{lemma}\label{90}
Suppose $E$ and $F$ are Banach lattices such that the Fremlin tensor product $E\overline{\otimes}F$ is order complete. Moreover, assume that $\textbf{B}=(B_{\alpha})_{\alpha \in I}$ and $\textbf{C}=(C_{\beta})_{\beta \in J}$ are dense band decompositions $E$ and $F$, respectively. Then, the collection $\textbf{A}=\{B_{\alpha}\widehat{\otimes}C_{\beta}; B_{\alpha}\in \textbf{B}, C_{\beta}\in \textbf{C}\}$ forms a dense band decomposition for $E\widehat{\otimes}F$.
\end{lemma}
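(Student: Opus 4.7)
The plan is to lift the band projections from $E$ and $F$ to $E\widehat{\otimes}F$, identify their ranges with the Fremlin projective tensor products $B_\alpha\widehat{\otimes}C_\beta$, and then check pairwise disjointness and density of the linear span.

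First, for each $\alpha\in I$ and $\beta\in J$ I would use the associated band projections $P_\alpha:E\to B_\alpha$ and $Q_\beta:F\to C_\beta$; each is a positive contractive lattice homomorphism with $P_\alpha^2=P_\alpha$, $Q_\beta^2=Q_\beta$ and $0\le P_\alpha\le I_E$, $0\le Q_\beta\le I_F$. The bilinear map $(x,y)\mapsto P_\alpha(x)\otimes Q_\beta(y)$ from $E\times F$ into $E\widehat{\otimes}F$ is positive, separately a lattice homomorphism in each variable, and of norm at most one because $\|\cdot\|_{\api}$ is a cross norm. By the universal property of the Fremlin projective tensor product, it extends uniquely to a positive contractive operator $R_{\alpha\beta}:E\widehat{\otimes}F\to E\widehat{\otimes}F$ with $R_{\alpha\beta}(x\otimes y)=P_\alpha(x)\otimes Q_\beta(y)$.

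Next, I would verify that each $R_{\alpha\beta}$ is a band projection. Idempotence holds on simple tensors by $P_\alpha^2=P_\alpha$ and $Q_\beta^2=Q_\beta$, hence on all of $E\widehat{\otimes}F$ by linearity and continuity. The inequality $0\le R_{\alpha\beta}\le I_{E\widehat{\otimes}F}$ follows from comparing the bilinear forms $(x,y)\mapsto P_\alpha(x)\otimes Q_\beta(y)$ and $(x,y)\mapsto x\otimes y$ on $E_+\times F_+$ and then appealing to the density of $E\overline{\otimes}F$ in $E\widehat{\otimes}F$. Since an idempotent operator sandwiched between $0$ and the identity on an order complete Archimedean vector lattice is a band projection, and order completeness of $E\overline{\otimes}F$ passes to the completion inside $E\widehat{\otimes}F$, $R_{\alpha\beta}$ projects onto a genuine band. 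Its range is the norm-closure of $B_\alpha\otimes C_\beta$ in $E\widehat{\otimes}F$, which I would identify with $B_\alpha\widehat{\otimes}C_\beta$ via the uniqueness clause of Fremlin's theorem applied to the ideal pair $(B_\alpha,C_\beta)\subseteq(E,F)$.

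Pairwise disjointness is then algebraic: whenever $(\alpha,\beta)\neq(\alpha',\beta')$ one has $P_\alpha P_{\alpha'}=0$ in $E$ or $Q_\beta Q_{\beta'}=0$ in $F$, so $R_{\alpha\beta}R_{\alpha'\beta'}=0$ on all simple tensors and thus on $E\widehat{\otimes}F$; for positive band projections this forces the bands $B_\alpha\widehat{\otimes}C_\beta$ and $B_{\alpha'}\widehat{\otimes}C_{\beta'}$ to be disjoint. For density, I would approximate an arbitrary simple tensor $x\otimes y$ by $x_n\otimes y_n$ with $x_n\in\Span\bigcup_{\alpha}B_\alpha$ and $y_n\in\Span\bigcup_{\beta}C_\beta$, using the cross-norm estimate $\|x\otimes y-x_n\otimes y_n\|_{\api}\le\|x-x_n\|\|y\|+\|x_n\|\|y-y_n\|$ together with the hypothesis that $\textbf{B}$ and $\textbf{C}$ are dense band decompositions. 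Each such $x_n\otimes y_n$ is a finite linear combination of elements of $\bigcup_{\alpha,\beta}B_\alpha\otimes C_\beta\subseteq\bigcup_{\alpha,\beta}B_\alpha\widehat{\otimes}C_\beta$, and then norm-density of $E\otimes F$ in $E\widehat{\otimes}F$ completes the argument.

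The main obstacle I expect is the identification of the range of $R_{\alpha\beta}$ with the intrinsic Fremlin projective tensor product $B_\alpha\widehat{\otimes}C_\beta$; this is precisely where the order completeness of $E\overline{\otimes}F$ should enter, ensuring that the natural map $B_\alpha\widehat{\otimes}C_\beta\to E\widehat{\otimes}F$ is an isometric Banach lattice embedding onto a projection band. Everything else then reduces to routine manipulations with cross norms and band projections.
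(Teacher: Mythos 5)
Your proposal takes a genuinely different route at the decisive step. The paper obtains the fact that $B_{\alpha}\widehat{\otimes}C_{\beta}$ is a projection band by quoting the Ben Amor--Guk--Yaman theorem on products of projection bands in the order complete space $E\overline{\otimes}F$ and then passing to the norm completion, and only afterwards identifies the associated projection with $P_{\alpha}\otimes Q_{\beta}$ through the uniqueness of the band decomposition of a simple tensor. You instead construct $R_{\alpha\beta}=P_{\alpha}\otimes Q_{\beta}$ directly from the universal property and recognize it as a band projection via the characterization of order projections as idempotents between $0$ and $I$. Since that characterization holds in every Riesz space (order completeness plays no role there, contrary to your parenthetical), your argument, once completed, does not visibly use the order completeness hypothesis at all --- a real gain in generality. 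Your disjointness argument via $R_{\alpha\beta}R_{\alpha'\beta'}=0$ is also cleaner than the paper's, whose displayed estimate $(x_0\wedge x_1)\otimes(y_0\vee y_1)=0$ only covers the case $\alpha\neq\alpha'$. The density argument is essentially the one in the paper (sums versus finite suprema of projections is immaterial).

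Two steps need repair. First, you cannot get $R_{\alpha\beta}\le I$ by ``comparing the bilinear forms on $E_+\times F_+$ and appealing to density'': a positive element of $E\widehat{\otimes}F$ is not in general a positive combination of positive simple tensors, so an inequality verified on $E_+\otimes F_+$ does not transfer to the positive cone. The correct route is to apply Fremlin's universal property to the positive bilinear map $(x,y)\mapsto x\otimes y-P_{\alpha}(x)\otimes Q_{\beta}(y)$, invoke the uniqueness clause to identify the induced positive operator with $I-R_{\alpha\beta}$ on $E\overline{\otimes}F$, and then pass to the norm closure. Second, the identification of the range of $R_{\alpha\beta}$ with the intrinsic $B_{\alpha}\widehat{\otimes}C_{\beta}$ is not where order completeness enters either; it follows from the fact that $P_{\alpha}$ and $Q_{\beta}$ are positive contractive projections, so every regular bilinear form on $B_{\alpha}\times C_{\beta}$ extends to $E\times F$ with the same norm via $\phi(P_{\alpha}\cdot\,,Q_{\beta}\cdot)$; hence the projective norms agree on $B_{\alpha}\otimes C_{\beta}$, and the natural Riesz homomorphism $B_{\alpha}\overline{\otimes}C_{\beta}\to E\overline{\otimes}F$ is injective because $P_{\alpha}\otimes Q_{\beta}$ is a left inverse for it. With these two points filled in, your argument is complete and, in fact, slightly stronger than the statement proved in the paper.
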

\begin{proof}
First, observe that by \cite[Proposition 3.9]{Gr:23}, both $E$ and $F$ are order complete. Put ${\textbf{A}_{0}}=\{B_{\alpha}\overline{\otimes}C_{\beta}; B_{\alpha}\in \textbf{B}, C_{\beta}\in \textbf{C}\}$. By \cite[Theorem 5.8]{Amor:22}, we see that each element of ${\textbf{A}_{0}}$ is a projection band in $E\overline{\otimes}F$. By \cite[Theorem 2.48]{AB1}, the elements of ${\textbf{A}}$ are also projection bands in $E\widehat{\otimes}F$. Moreover, the elements of ${\textbf{A}}$ are pairwise disjoint. We claim that ${\textbf{A}}$ is a dense band decomposition for $E\widehat{\otimes}F$. We use \cite[Lemma 4.10]{KMT}. Note that the elements of $\textbf{A}$ are pairwise disjoint. For each $\alpha,\alpha',\beta,\beta'$, we have $(B_{\alpha}\otimes C_{\beta})\wedge (B_{\alpha'}\otimes C_{\beta'})=0$. Otherwise, for each non-zero positive $u\in B_{\alpha}\otimes C_{\beta})\wedge (B_{\alpha'}\otimes C_{\beta'})$, by \cite[1(A) d]{Fremlin:74}, we can find $x_0\in {B_{\alpha}}_{+}$, $x_1\in{B_{\alpha'}}_{+}$, $y_0\in{C_{\beta}}_{+}$ and $y_1\in{C_{\beta'}}_{+}$ with $u\leq x_0\otimes y_0$ and $u\leq x_1\otimes y_1$. So,
\[u\leq (x_0\otimes y_0)\wedge (x_1\otimes y_1)\leq (x_0\wedge x_1)\otimes (y_0\vee y_1)=0,\]
that is a contradiction. Now, it is routine to check that if for two subsets $A,B$ in a normed lattice $A\wedge B=0$, then, $\overline{A}\wedge\overline{B}=0$. So, the elements of $\textbf{A}$ are pairwise disjoint.
We need to characterize band projections for elements of ${\textbf{A}}$. Fix $\alpha \in I$ and $\beta \in J$. Assume that $S_{\alpha,\beta}$ is the corresponding band projection from $E\widehat{\otimes}F$ onto $B_{\alpha}\widehat{\otimes}C_{\beta}$. Also, assume that $P_{\alpha}$ and $Q_{\beta}$ are corresponding band projections onto $B_{\alpha}$ and $C_{\beta}$, respectively. By considering the lattice bimorphism $\sigma: E\times F \to  B_{\alpha}\overline{\otimes}C_{\beta}$ defined by $\sigma(x,y)=P_{\alpha}(x)\otimes Q_{\beta}(y)$ and using \cite[1A(b)]{Fremlin:74}, there exists a lattice homomorphism $P_{\alpha}\otimes Q_{\beta}:E\overline{\otimes}F \to B_{\alpha}\overline{\otimes}C_{\beta}$ via $(P_{\alpha}\otimes Q_{\beta})(x\otimes y)=P_{\alpha}(x)\otimes Q_{\beta}(y)$. We claim that $P_{\alpha}\otimes Q_{\beta}=S_{\alpha,\beta}$ on $E\overline{\otimes}F$  and so on $E\widehat{\otimes}F$ by taking a norm completion.

Assume that $x\in E$ and $y\in F$. We can write $x\otimes y=u_{\alpha,\beta}+v_{\alpha,\beta}$ in which, $u_{\alpha,\beta}\in B_{\alpha}\overline{\otimes}C_{\beta}$ and $v_{\alpha,\beta}\in {(B_{\alpha}\overline{\otimes}C_{\beta})}^{d}$ and this representation is unique.
On the other hand, we can also write $x=r_{\alpha}+{r_{\alpha}}^{d}$ and $y=w_{\beta}+{w_{\beta}}^{d}$, in which $r_{\alpha}\in B_{\alpha}$, ${r_{\alpha}}^{d}\in {B_{\alpha}}^{d}$, $w_{\beta}\in C_{\beta}$ and ${w_{\beta}}^{d}\in {C_{\beta}}^{d}$. Therefore, we have
\[x\otimes y=(r_{\alpha}+{r_{\alpha}}^{d})\otimes (w_{\beta}+{w_{\beta}}^{d})=r_{\alpha}\otimes w_{\beta}+{r_{\alpha}}^{d}\otimes w_{\beta}+r_{\alpha}\otimes {w_{\beta}}^{d}+{r_{\alpha}}^{d}\otimes {w_{\beta}}^{d}.\]
Note that $r_{\alpha}\otimes w_{\beta}\in B_{\alpha}\overline{\otimes}C_{\beta}$ and
${r_{\alpha}}^{d}\otimes w_{\beta}+r_{\alpha}\otimes {w_{\beta}}^{d}+{r_{\alpha}}^{d}\otimes {w_{\beta}}^{d}\in (B_{\alpha}\overline{\otimes}C_{\beta})^{d}$. Thus,
$u_{\alpha,\beta}=r_{\alpha}\otimes w_{\beta}$ by uniqueness of the representation. So, $S_{\alpha,\beta}(x\otimes y)=u_{\alpha,\beta}=r_{\alpha}\otimes w_{\beta}=P_{\alpha}(x)\otimes Q_{\beta}(y)$. Therefore, $P_{\alpha}\otimes Q_{\beta}=S_{\alpha,\beta}$ on $E\otimes F$. Every element of $E\overline{\otimes}F$ is a finite suprema and a finite infima of some elements of $E\otimes F$. Since $S_{\alpha,\beta}$ is a band projection, it is order continuous lattice homomorphism so that $P_{\alpha}\otimes Q_{\beta}=S_{\alpha,\beta}$ on $E\overline{\otimes}F$ and so on $E\widehat{\otimes}F$ by an extension that is also a band projection, as well.

Now, suppose $x\in E$ and $y\in F$ and also $\varepsilon>0$ is arbitrary. By \cite[Lemma 4.10]{KMT}, we can find indices $\{\alpha_1,\ldots,\alpha_r\}$ and also $\{\beta_1\ldots,\beta_s\}$ such that $\|x-\bigvee_{i=1}^{r}P_{\alpha_i}(x)\|<\frac{\varepsilon}{2\|y\|}$ and $\|y-\bigvee_{j=1}^{s}Q_{\beta_j}(y)\|<\frac{\varepsilon}{2\|x\|}$. Note that $\||\bigvee_{i=1}^{r}P_{\alpha_i}(x)|\|\leq \|\bigvee_{i=1}^{r}|P_{\alpha_i}(x)|\|=\|\bigvee_{i=1}^{r}P_{\alpha_i}(|x|)\|\leq \|x\|$. Therefore,
\[ \|x\otimes y-\bigvee_{i=1}^{r} \bigvee_{j=1}^{s} P_{\alpha_i}(x)\otimes Q_{\beta_j}(y)\|=
\|x\otimes y-\bigvee_{i=1}^{r} \bigvee_{j=1}^{s} P_{\alpha_i}(x)\otimes Q_{\beta_j} +\bigvee_{i=1}^{r} P_{\alpha_i}(x)\otimes y- \bigvee_{i=1}^{r}P_{\alpha_i}(x)\otimes y\|\leq\]
\[\|x- \bigvee_{i=1}^{r}P_{\alpha_i}(x)\|\|y\|+
\||\bigvee_{i=1}^{r}P_{\alpha_i}(x)|\|\|\bigvee_{j=1}^{s}(y-Q_{\beta_j}(y))\|<\varepsilon.\]
So, we conclude that for each $v\in E\otimes F$, we can find indices $\{\alpha_1,\ldots,\alpha_n\}$ and $\{\beta_1,\ldots, \beta_m\}$ such that  $\|v-\bigvee_{i=1}^{n}\bigvee_{j=1}^{m}(P_{\alpha_i}\otimes Q_{\beta_j})(v)\|<\frac{\varepsilon}{2}$.

Now, we show that for each $u\in E\widehat{\otimes}F$, we have $\|u-\bigvee_{i=1}^{n}\bigvee_{j=1}^{m}(P_{\alpha_i}\otimes Q_{\beta_j})(u)\|<\varepsilon$. This completes the proof.
By density, there exists $v\in E\otimes F$ with $\|u-v\|<\frac{\varepsilon}{2}$. By the former case, $\|v-\bigvee_{i=1}^{n}\bigvee_{j=1}^{m}(P_{\alpha_i}\otimes Q_{\beta_j})(v)\|<\frac{\varepsilon}{2}$. We have
\[\|u-\bigvee_{i=1}^{n}\bigvee_{j=1}^{m}(P_{\alpha_i}\otimes Q_{\beta_j})(u)\|\leq \|(u-v)-\bigvee_{i=1}^{n}\bigvee_{j=1}^{m}(P_{\alpha_i}\otimes Q_{\beta_j})(u-v)\|+ \|v-\bigvee_{i=1}^{n}\bigvee_{j=1}^{m}(P_{\alpha_i}\otimes Q_{\beta_j})(v)\|<\varepsilon.\]
\end{proof}
Note that an order continuous normed lattice need not be order complete. For example the normed lattice consisting of all step-functions in $L^2[0,1]$ is order continuous but not order complete. So, order continuity of $E\overline{\otimes} F$ does not imply order completeness of it, in general. Nevertheless, we can have a similar version of Lemma \ref{90} in this setting.
\begin{lemma}\label{91}
Suppose $E$ and $F$ are Banach lattices such that the Fremlin tensor product $E\overline{\otimes}F$ is order continuous. Moreover, assume that $\textbf{B}=(B_{\alpha})_{\alpha \in I}$ and $\textbf{C}=(C_{\beta})_{\beta \in J}$ are dense band decompositions of $E$ and $F$, respectively. Then, the collection $\textbf{A}=\{B_{\alpha}\widehat{\otimes}C_{\beta}; B_{\alpha}\in \textbf{B}, C_{\beta}\in \textbf{C}\}$ forms a dense band decomposition for $E\widehat{\otimes}F$.
\end{lemma}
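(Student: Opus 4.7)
The plan is to adapt the proof of Lemma~\ref{90} by shifting the relevant band-projection construction from $E\overline{\otimes}F$ to its norm completion $E\widehat{\otimes}F$, thereby bypassing the order-completeness requirement on $E\overline{\otimes}F$.

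First I would extract Dedekind completeness of $E$ and $F$ from the hypothesis. For any net $x_\alpha \downarrow 0$ in $E$ and any $0 < y \in F$, the net $x_\alpha \otimes y$ decreases to $0$ in $E\overline{\otimes}F$, so order continuity of $E\overline{\otimes}F$ combined with the cross-norm identity $\|x_\alpha \otimes y\|_{|\pi|} = \|x_\alpha\|\,\|y\|$ forces $\|x_\alpha\| \to 0$. Hence $E$ (and symmetrically $F$) is an order continuous Banach lattice and therefore Dedekind complete by the standard fact that every order continuous Banach lattice is order complete.

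Next I would produce the band projections onto each $B_\alpha \widehat{\otimes} C_\beta$ in $E\widehat{\otimes}F$ via universal-property arguments. The lattice bimorphism $(x,y) \mapsto P_\alpha(x) \otimes Q_\beta(y)$ from $E \times F$ into $B_\alpha \widehat{\otimes} C_\beta$ has projective norm at most one, since $P_\alpha$ and $Q_\beta$ are band projections and the projective norm is a cross-norm; hence \cite[1A(b)]{Fremlin:74} together with the universal property of the Fremlin projective tensor product produces a positive, contractive lattice homomorphism $P_\alpha \widehat{\otimes} Q_\beta \colon E\widehat{\otimes}F \to B_\alpha \widehat{\otimes} C_\beta$. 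It is idempotent (checked on simple tensors and extended by norm continuity), satisfies $0 \leq P_\alpha \widehat{\otimes} Q_\beta \leq I$, and has range $B_\alpha \widehat{\otimes} C_\beta$, so once its range is shown to be solid it will be a band projection. The remaining ingredients of Lemma~\ref{90}---pairwise disjointness of $\textbf{A}$, the identification $P_\alpha \otimes Q_\beta = S_{\alpha,\beta}$, the telescoping estimate for $\|x \otimes y - \bigvee_{i,j} P_{\alpha_i}(x) \otimes Q_{\beta_j}(y)\|$, and the final density lift from $E \otimes F$ to $E\widehat{\otimes}F$---carry over verbatim, as they only use the cross-norm inequality and Riesz algebra identities rather than any order-completeness of $E\overline{\otimes}F$ itself.

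The main obstacle will be verifying solidity of $B_\alpha \widehat{\otimes} C_\beta$ inside $E\widehat{\otimes}F$, which is what upgrades the positive idempotent lattice homomorphism to a genuine band projection without appeal to \cite[Theorem 5.8]{Amor:22} (which needed order completeness of $E\overline{\otimes}F$). I would establish this by approximating an arbitrary $u \in E\widehat{\otimes}F$ with $|u| \leq v \in B_\alpha \widehat{\otimes} C_\beta$ in norm by elements of $B_\alpha \otimes C_\beta$, exploiting the ideal property of $B_\alpha \otimes C_\beta$ inside $E \otimes F$ together with the norm continuity of the lattice operations; once this is in place, the rest of the Lemma~\ref{90} framework can be redeployed essentially unchanged.
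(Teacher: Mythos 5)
Your overall plan follows Lemma~\ref{90}, but the step you yourself flag as ``the main obstacle'' --- showing that $B_\alpha\widehat{\otimes}C_\beta$ is solid in $E\widehat{\otimes}F$, so that your positive idempotent $P_\alpha\widehat{\otimes}Q_\beta$ is actually a band projection --- is not resolved by your sketch, and the specific device you invoke does not exist. There is no ``ideal property of $B_\alpha\otimes C_\beta$ inside $E\otimes F$'': the algebraic tensor product is not a sublattice, and even inside $E\overline{\otimes}F$ the solidity of $B_\alpha\overline{\otimes}C_\beta$ is precisely the nontrivial content that Lemma~\ref{90} outsources to \cite{Amor:22}. Moreover, the approximation you propose (norm-approximating $u$ with $|u|\le v\in B_\alpha\widehat{\otimes}C_\beta$ by elements of $B_\alpha\otimes C_\beta$) is circular as stated: the natural way to push a generic approximant from $E\otimes F$ into $B_\alpha\otimes C_\beta$ is to apply $P_\alpha\otimes Q_\beta$ and argue it fixes $u$ up to small error, which is exactly what solidity is supposed to deliver.

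The paper closes this gap by a different and shorter route that you miss entirely: order continuity of $E\overline{\otimes}F$ passes to its norm completion $E\widehat{\otimes}F$ (\cite[Theorem 3.27]{AB1}), and in an order continuous Banach lattice \emph{every} band is automatically a projection band, so no projection needs to be constructed by hand. One then identifies $B_\alpha\widehat{\otimes}C_\beta$ with a band: writing $B_\alpha=B_{x_\alpha}$, $C_\beta=C_{y_\beta}$, \cite[Theorem 4.2]{Amor:22} makes $B_\alpha\overline{\otimes}C_\beta$ order dense --- hence, by order continuity, norm dense --- in the band $D_{\alpha,\beta}$ of $E\overline{\otimes}F$ generated by $x_\alpha\otimes y_\beta$, and since bands are norm closed its closure $B_\alpha\widehat{\otimes}C_\beta$ \emph{is} that band in $E\widehat{\otimes}F$. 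If you want to keep your construction, you should at least prove that $B_\alpha\overline{\otimes}C_\beta$ is an ideal in $E\overline{\otimes}F$ (or cite a result to that effect) and use that the closure of an ideal is a closed ideal, hence a band in the order continuous lattice $E\widehat{\otimes}F$; as written, the solidity step is a genuine gap. Your opening derivation of order continuity of $E$ and $F$ via the cross-norm is fine in spirit, but the claim $x_\alpha\otimes y\downarrow 0$ needs the separate order continuity of the canonical bimorphism, which deserves a citation rather than an assertion.
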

\begin{proof}
The proof  essentially has the same idea as the proof of Lemma \ref{90}. First, note that by \cite[Theorem 3.27]{AB1}, we conclude that $E\widehat{\otimes}F$ is also order continuous so that both $E$ and $F$ are order continuous by \cite[Lemma 12]{Z:24}. Therefore, every band in $E$, $F$ and $E\widehat{\otimes}F$ is a projection band. For each $\alpha \in I$ and for each $\beta \in J$, assume that $B_{\alpha}=B_{x_{\alpha}}$ and $C_{\beta}=C_{y_{\beta}}$. By \cite[Theorem 4.2]{Amor:22}, $B_{\alpha}\overline{\otimes}C_{\beta}$ is an order dense vector sublattice in the band in $E\overline{\otimes} F$ generated by $x_{\alpha}\otimes y_{\beta}$, denoted by ${D}_{\alpha,\beta}$, so that it is norm dense. On the other hand, every band is norm closed so that ${D}_{\alpha,\beta}$ is also a band (closed ideal) in $E\widehat{\otimes}F$ by \cite[Theorem 3.8]{AB1}. Therefore, $\textbf{A}$ consisting of projection bands in $E\widehat{\otimes}F$. The rest of the proof is similar to the proof of Lemma \ref{90}.
\end{proof}
\begin{theorem}\label{009}
Suppose $E$ and $F$ are Banach lattices such that $E\overline{\otimes}F$ is both order continuous and order complete. Moreover, assume that $S(X)$ and $S(Y)$ are the corresponding representations of $E$ and $F$, respectively as described in Theorem \ref{000}. Then, there exists a similar representation for the Fremlin projective tensor product of $E$ and $F$ in $S(X\times Y)$, as well.
\end{theorem}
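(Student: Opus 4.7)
The plan is to mimic the two-part structure of Theorem \ref{000}: first treat the case where $E$ and $F$ possess quasi-interior points, then bootstrap to the general situation through the dense band decomposition supplied by Lemma \ref{90}. As a preliminary reduction, note that since $E\overline{\otimes}F$ is order continuous and norm-dense in $E\widehat{\otimes}F$, the latter is order continuous by \cite[Theorem 3.27]{AB1}, and hence $E$ and $F$ themselves are order continuous by \cite[Lemma 12]{Z:24}. Thus Theorem \ref{000} provides $un$-homeomorphism lattice isomorphisms $T\colon E\to S(X)$ and $S\colon F\to S(Y)$, and Corollary \ref{08} already yields a lattice isomorphism $T\otimes S\colon E\overline{\otimes}F\to S(X\times Y)$ onto an order dense sublattice.

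Assume first that $E$ and $F$ possess quasi-interior points $u$ and $v$, so that $u\otimes v$ is a quasi-interior point of $E\widehat{\otimes}F$ by Proposition \ref{0000}. I extend $T\otimes S$ to $E\widehat{\otimes}F$ by the same mechanism used in Theorem \ref{000}: since $S(X\times Y)$ is laterally complete and $E\widehat{\otimes}F$ is order complete, \cite[Theorem 2.32]{AB} delivers an order continuous extension of $T\otimes S$ to a lattice isomorphism $\widetilde{T\otimes S}\colon E\widehat{\otimes}F\to S(X\times Y)$ whose image remains order dense, hence an ideal by Corollary \ref{007}. The extended $un$-topology $\tau_{E\widehat{\otimes}F}$ is then a Hausdorff, linear, locally solid topology on $S(X\times Y)$. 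Since $\widetilde{T\otimes S}(u\otimes v)=\textbf{1}_{X\times Y}$ is a weak unit of $S(X\times Y)$, I verify the $un$-homeomorphism property exactly as in the quasi-interior point paragraph of Theorem \ref{000}: $un$-nullness in $E\widehat{\otimes}F$ is tested by $\|\,\cdot\wedge(u\otimes v)\|\to 0$, which transports across $\widetilde{T\otimes S}$ to $\|\,\cdot\wedge\textbf{1}_{X\times Y}\|_{E\widehat{\otimes}F}\to 0$, and the latter coincides with $\tau_{E\widehat{\otimes}F}$-convergence by \cite[Proposition 3.1]{KLT}.

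For the general case I invoke \cite[Proposition 1.a.9]{LT} to get dense band decompositions $(B_\alpha)_{\alpha\in I}$ of $E$ and $(C_\beta)_{\beta\in J}$ of $F$, with quasi-interior points $x_\alpha\in B_\alpha$ and $y_\beta\in C_\beta$. By Lemma \ref{90}, the family $\{B_\alpha\widehat{\otimes}C_\beta\}_{\alpha,\beta}$ is a dense band decomposition of $E\widehat{\otimes}F$, and each $B_\alpha\widehat{\otimes}C_\beta$ has quasi-interior point $x_\alpha\otimes y_\beta$ by Proposition \ref{0000}. Applying the previous step to every pair $(B_\alpha,C_\beta)$ produces $un$-homeomorphism lattice isomorphisms $T_\alpha\otimes S_\beta\colon B_\alpha\widehat{\otimes}C_\beta\to S(X_\alpha\times Y_\beta)$. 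Setting $X=\sqcup_\alpha X_\alpha$ and $Y=\sqcup_\beta Y_\beta$ and identifying $S(X\times Y)=S(\sqcup_{\alpha,\beta}X_\alpha\times Y_\beta)$ with $\prod_{\alpha,\beta}S(X_\alpha\times Y_\beta)$ by Remark \ref{0008}, I assemble the global map $\Phi$ componentwise. Just as at the end of Theorem \ref{000}, the product topology on $\prod_{\alpha,\beta}S(X_\alpha\times Y_\beta)$ is Hausdorff, locally solid and Lebesgue by \cite[Theorems 2.20, 3.11]{AB1}; it is also $un$-complete by \cite[Theorem 6.7]{KLT} and satisfies the pre-Lebesgue property, so it coincides with $\tau_{E\widehat{\otimes}F}$ via \cite[Theorems 3.26, 7.53]{AB1}. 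Band-projection transfer through \cite[Theorem 4.12]{KMT} then exchanges $un$-convergence between $E\widehat{\otimes}F$ and the product, finishing the proof.

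The principal technical hurdle I anticipate is verifying that the extended $un$-topology on $S(X\times Y)$ \emph{induced by the completion} $E\widehat{\otimes}F$ genuinely agrees with the one induced by the norm-dense sublattice $E\overline{\otimes}F$, so that the homeomorphism verification for $\widetilde{T\otimes S}$ really follows from the analogous verification on $E\overline{\otimes}F$. This reduces to the Lebesgue/pre-Lebesgue and $un$-completeness bookkeeping already carried out in Theorem \ref{000}; the tensor-product context only adds indexing complexity and contributes no essentially new difficulty, provided that the order completeness assumption on $E\overline{\otimes}F$ is used to legitimize Lemma \ref{90} and Corollary \ref{007}.
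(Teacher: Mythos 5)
Your proposal is correct and follows the same overall architecture as the paper: reduce to the case of quasi-interior points, extend $T\otimes S$ from $E\overline{\otimes}F$ to $E\widehat{\otimes}F$ by order continuity (using the order density supplied by the order completeness hypothesis), and then assemble the general case from a dense band decomposition of $E\widehat{\otimes}F$ together with \cite[Theorem 4.12]{KMT}. You deviate in two places, both harmless. First, in the quasi-interior-point case you verify the homeomorphism by the single-element test $\||w_\gamma|\wedge(u\otimes v)\|\to 0$, transported through the isomorphism to the test against $\textbf{1}_{X\times Y}$ exactly as in the weak-unit paragraph of Theorem \ref{000}; the paper instead passes to increasing sequences of indices, uses the $uo$-continuity of $T\otimes S$ from Corollary \ref{08} together with \cite[Proposition 9.2 and Theorem 9.5]{KLT}, and then invokes topologicity of $un$-convergence. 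Your route is more direct, but it rests on the fact that $un$-convergence in a Banach lattice with a quasi-interior point is determined by that single element, which you should cite explicitly (it is in \cite{KMT}). Second, you invoke Lemma \ref{90} (which needs order completeness of $E\overline{\otimes}F$) for the dense band decomposition where the paper uses Lemma \ref{91} (which needs order continuity); since the theorem assumes both properties, either lemma applies, and both you and the paper ultimately reduce the passage between $E\widehat{\otimes}F$ and the product $\prod_{\alpha,\beta}S(X_\alpha\times Y_\beta)$ to the same band-projection bookkeeping.
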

\begin{proof}
First, note that order continuity of $E\overline{\otimes}F$ implies order continuity of $E\hat{\otimes}F$ so that order continuity of both $E$ and $F$ by \cite[Lemma 12]{Z:24}. By considering Theorem \ref{000},  there are Hausdorff spaces $X$ and $Y$, lattice isomorphisms and also $un$-homeomorphisms $T:E\to S(X)$ and $S:F\to S(Y)$. Consider the bi-injective lattice bimorphism defined via $(x,y)\to T(x)\otimes S(y)$ from $E\times F$ into $S(X\times Y)$; it induces a lattice isomorphism $T\otimes S:E\overline{\otimes }F\to S(X\times Y)$  defined via $(T\otimes S)(x\otimes y)=T(x)\otimes S(y)$. By the assumption, $E\overline{\otimes}F$ is order complete. So,by \cite[Theorem 4.31]{AB1}, it is order dense in $E\widehat{\otimes}F$. Therefore, $T\otimes S$ can have a unique order continuous extension from $E\widehat{\otimes}F$ into $S(X,\times Y)$ by \cite[Theorem 2.32]{AB}. Therefore, $E\widehat{\otimes}F$ is an order complete order dense vector sublattice of $S(X\times Y)$ so that  an ideal by \cite[Theorem 2.31]{AB}. So, we can have the extended $un$-topology on $S(X\times Y)$ that make it locally solid because of order continuity of $E\widehat{\otimes}F$.

Note that the extended $un$-topology has the $\sigma$-Lebesgue property by \cite[Theorem 7.49]{AB1}. Also, by \cite[Proposition 9.1]{KLT}, it satisfies the pre-Lebesgue property. So, it has a Lebesgue property by \cite[Theorem 3.27]{AB1}. We show that $T\otimes S$ is a $un$-homeomorphism. Suppose $(u_{\alpha})\subseteq (E\widehat{\otimes} F)_{+}$ is $un$-null. There exists an increasing sequence $(\alpha_n)$ of indices such that $u_{\alpha_n}\xrightarrow{un}0$ and $u_{\alpha_n}\xrightarrow{uo}0$ as well by \cite[Corollary 3.5]{Den:17}. By Corollary \ref{08}, $(T\otimes S)(u_{\alpha_n})\xrightarrow{uo}0$ in $S(X\times Y)$. By \cite[Proposition 9.2]{KLT}, we see that $(T\otimes S)(u_{\alpha_n})\xrightarrow{un}0$. Since $un$-convergence is topological, we conclude that $(T\otimes S)(u_\alpha)\xrightarrow{un}0$.

 For the converse, assume that $(T\otimes S)(x_{\gamma})\xrightarrow{un}0$. First, assume that both $E$ and $F$ have quasi-interior points so that by Proposition \ref{0000}, $E\widehat{\otimes}F$ has a quasi-interior point, as well. Moreover, the corresponding topological spaces $X$ and $Y$ can be assumed to be compact. By \cite[Corollary 3.2]{KLT}, there exists an increasing sequence $(\gamma_n)$ of indices such that that $(T\otimes S)(x_{\gamma_n})\xrightarrow{un}0$. By \cite[Theorem 9.5]{KLT} and by passing to a further subsequence, we may assume that $(T\otimes S)(x_{\gamma_n})\xrightarrow{uo}0$ in $S(X\times Y)$. By using regularity of $E\widehat{\otimes}F$ in $S(X\times Y)$ and also Lemma \ref{02}, $x_{\gamma_n}\xrightarrow{uo}0$ in $E\widehat{\otimes}F$ so that $x_{\gamma_n}\xrightarrow{un}0$. Again, since $un$-convergence is topological, we see that $x_{\gamma}\xrightarrow{un}0$. For the general case, we may use the dense band decomposition for $E\widehat{\otimes}F$ as described in  Lemma \ref{91}. Note that since $E$ and $F$ are order continuous, by \cite[Proposition 1.a.9]{LT}, they possesses  dense band decompositions $\textbf{B}=(B_{\alpha})_{\alpha \in I}$ and $\textbf{C}=(C_{\beta})_{\beta \in J}$, respectively. By Lemma \ref{91}, $\textbf{A}=\{B_{\alpha}\widehat{\otimes}C_{\beta}; B_{\alpha}\in \textbf{B}, C_{\beta}\in \textbf{C}\}$ forms a dense band decomposition for $E\widehat{\otimes}F$. By the former case, there are compact Hausdorff spaces $(X_{\alpha})_{\alpha\in I}$ and $(Y_{\beta})_{\beta \in J}$ and also lattice isomorphisms $T_{\alpha}\otimes S_{\beta}:B_{\alpha}\widehat{\otimes}C_{\beta}\to S(X_{\alpha}\otimes Y_{\beta})$ that are $un$-homeomorphisms. Now, we can use from a similar representation as we had for Theorem \ref{09}.


  Assume that $(T\otimes S)(x_{\gamma})\xrightarrow{un}0$. Write $x_{\gamma}=(y_{\gamma}^{\alpha,\beta})$ in which $y_{\gamma}^{\alpha,\beta}\in B_{\alpha}\widehat{\otimes}C_{\beta}$. Therefore, $T_{\alpha}\otimes S_{\beta}(y_{\gamma}^{\alpha,\beta})\xrightarrow{un}0$ in $S(X_{\alpha}\times Y_{\beta})$. By the former case, $(P_{\alpha}\otimes Q_{\beta})(x_{\gamma})=y_{\gamma}^{\alpha,\beta}\xrightarrow{un} 0$ in $B_{\alpha}\widehat{\otimes}C_{\beta}$. Now, by using \cite[Theorem 4.12]{KMT}, we see that $x_{\gamma}\xrightarrow{un}0$ in $E\widehat{\otimes}F$.

\end{proof}

{\bf On behalf of all authors, the corresponding author states that there is no conflict of interest.}

\end{document}